\newtheorem{theorem}{Theorem}[section]
\newtheorem{definition}{Definition}[section]
\newtheorem{problem}{Problem}[section]
\newtheorem{example}{Example}[section]
\begin{document}

\title{Reconstruction of acoustic sources from multi-frequency phaseless far-field data}

\author{
Deyue Zhang\thanks{School of Mathematics, Jilin University, Changchun, China. {\it dyzhang@jlu.edu.cn}},
Yukun Guo\thanks{School of Mathematics, Harbin Institute of Technology, Harbin, China. {\it ykguo@hit.edu.cn} (Corresponding author)},
Fenglin Sun\thanks{School of Mathematics, Jilin University, Changchun, China. {\it sunfl18@jlu.edu.cn}}\ \ and
Xianchao Wang\thanks{School of Mathematics, Harbin Institute of Technology, Harbin, China. {\it xcwang90@gmail.com}}
}
\date{}

\maketitle

\begin{abstract}
We consider the inverse source problem of determining an acoustic source from multi-frequency phaseless far-field data. By supplementing some reference point sources to the inverse source model, we develop a novel strategy for recovering the phase information of far-field data. This reference source technique leads to an easy-to-implement phase retrieval formula. Mathematically, the stability of the phase retrieval approach is rigorously justified. Then we employ the Fourier method to deal with the multi-frequency inverse source problem with recovered phase information. Finally, some two and three dimensional numerical results are presented to demonstrate the viability and effectiveness of the proposed method.
\end{abstract}

\noindent{\it Keywords}: inverse source problem, phaseless, reference point source, phase retrieval, Fourier method, far-field


\section{Introduction}

The problems of locating or imaging the source excitations using wave propagation make enormous demands on a wide range of realistic engineering applications. Typical scenarios of such inverse source problems include acoustic tomography \cite{AZML07, LU15, SU09} and medical imaging \cite{AM06, ABF02, Arr99, DLU19, FKM04}. In the last decades, great efforts had been devoted to the numerical methods for the inverse source problem of determining an acoustic source. We refer interested readers to \cite{AHLS17, BLLT15, BLRX15, WMGL18, WGZL17, ZG15, ZGLL19} for the sampling method, the continuation method, the eigenfunction expansion method and the Fourier method for recovering the static sources and \cite{WGLL17, WGLL19} for the investigations on imaging the moving point sources.

For the inverse source problems in the frequency domain, the intensity and phase information of the complex-valued radiated data cannot always been measured easily. In fact, in a variety of practical applications, only the modulus or intensity information of the data is available. Therefore, the phaseless inverse problems deserve mathematical and numerical studies. Bao, Lin and Triki \cite{BLT11} introduced the continuation methods for an inverse scattering from phaseless measurements, which is able to capture both the macro structures and small scales of the source term. Zhang et al \cite{ZGLL18} developed a reference point source strategy to recovered the radiated near fields via adding extra artificial sources to the inverse source system. Several numerical methods for the inverse scattering from phaseless data can be found in \cite{AHN19, DZG19, IK11, Kli17, ZGLL18}.

Motivated by \cite{ZGLL18}, we extend the reference point source technique to the case of reconstructing the source function from phaseless far-field data. The incorporation of reference point sources leads to a concise phase retrieval formula and then the inverse source problem is solved by the Fourier method with multi-frequency phased data. The most significant advantage of our method is the computational simplicity since it does not rely on any regularization or iteration process. For similar investigations on phaseless inverse scattering problem, we refer to \cite{JLZ18, JLZ19}.

The rest of this paper is organized as follows. Section \ref{sec:problem_setup} introduces the reference source technique and establish a phase retrieval formula for the far-field data. The results of stability are given in section \ref{sec:stability}. Section \ref{sec:Fourier_method} severs as a brief review of the Fourier method with phased far-field data. In section \ref{sec:numerics}, we present several two- and three-dimensional numerical examples to show the effectiveness of our method.

\section{Reference source technique and the phase retrieval formula}\label{sec:problem_setup}

We first introduce the model problem in $\mathbb{R}^m, m=2, 3$. Let $S \in L^2(\mathbb{R}^m)$ be a source to the homogeneous Helmholtz equation
 \begin{equation}\label{eq:Helmholtz_eqn}
 \Delta u+k^2 u=S \quad \text{in}\ \mathbb{R}^m,
 \end{equation}
where $k>0$ is the wavenumber. We assume that $S$ is independent of $k$ and $\mathrm{supp} S \subset \subset D$, where $D$ is a rectangle in two dimensions or a cuboid in three dimensions centered at the origin. The field $u$ satisfies the Sommerfeld radiation condition
\begin{equation}\label{eq:radiation}
\lim_{r\to \infty}  r^{\frac{m-1}{2}}\left(\frac{\partial u}{\partial r}-\mathrm{i}ku \right)=0, \quad r=|x|.
\end{equation}
Then, the solution to \eqref{eq:Helmholtz_eqn}-\eqref{eq:radiation} can be represented as
\begin{equation*}
u(x, k)=-\int_{D} \Phi_k (x, y)S(y)\,\mathrm{d}y,
\end{equation*}
where
\begin{equation*}
\Phi_k (x, y)=
\begin{cases}
\dfrac{\mathrm{i}}{4} H_0^{(1)}(k|x-y|), & m=2, \vspace{2mm} \\
\dfrac{\mathrm{e}^{\mathrm{i}k|x-y|}}{4\pi|x-y|}, & m=3,
\end{cases}
\end{equation*}
is the fundamental solution to the Helmholtz equation, and $H_0^{(1)}$ denotes the zeroth-order Hankel function of the first kind. Further, $u(x,k)$ admits the asymptotic behavior of the form
\begin{equation}\label{eq:far-field}
u(x,k)=\frac{\mathrm{e}^{\mathrm{i}k|x|}}{|x|^{\frac{m-1}{2}}} \left\{u_\infty(\hat x, k)+\mathcal{O}\left(\frac{1}{|x|}\right)\right\}, \quad |x|\rightarrow \infty,
\end{equation}
uniformly in all directions $\hat x=x/|x|$. In \eqref{eq:far-field}, the complex function $u_\infty$ is known as the far-field pattern and given by
\begin{equation*}
u_\infty(\hat x, k)=-\gamma_m \int_{D} S(y)\mathrm{e}^{-\mathrm{i}k \hat x\cdot y}\,\mathrm{d}y,\quad \hat x\in \Omega,
\end{equation*}
where
$$
\gamma_m=
\begin{cases}
 \dfrac{\mathrm{e}^{\mathrm{i}\pi/4}}{\sqrt{8\pi k}},  & m=2, \vspace{2mm} \\
 \dfrac{1}{4\pi},  & m=3,
\end{cases}
$$
and $\Omega$ is the unit sphere in $\mathbb{R}^3$. The inverse problem considered in this paper can be stated as follows.

\begin{problem}\label{pro:inverse_source_phaseless}
Let $N \in \mathbb{N}$ and $\mathbb{K}_N$ be an admissible set consisting of a finite number of wavenumbers. Then the inverse source problem is to reconstruct an approximation of the source $S(x)$ from the multi-frequency phaseless far-field data $\{|u_\infty(\hat x, k)|:\hat x\in \Omega, k\in \mathbb{K}_N\}$.
\end{problem}

It is well known that the source $S$ cannot be uniquely determined from the phaseless data.
On this account, we will next resort to the reference point source technique to tackle the non-uniqueness issue of the problem.

In this work, we will decompose Problem \ref{pro:inverse_source_phaseless} into two subproblems to overcome the difficulty of non-uniqueness: (i) recovering the phase information of the far-field data and (ii) reconstructing the source with phased data. Motivated by the reference source technique proposed in \cite{ZGLL18}, we added two point sources into the inverse source system and derive a linear system of equations for the far-field data. The locations and intensities of the reference point sources should be suitably chosen such that the linear system admits a unique solution, which will be discussed in detail in the next section. Based on the retrieved far-field data, we would be able to reconstruct the source term by the standard Fourier method developed in \cite{WGZL17}.

Without loss of generality, let $D=(-a/2, a/2)^m$ and $\hat{x}$ denote the observation direction.
Take two points $z_j:=\alpha_j\hat{x}$, where $j=1,2$, $\alpha_j\in\mathbb{R}, m=2,3$. Let $\delta_j$ be the Dirac distributions at points $z_j$ and $c_j$  be the scaling factors
\begin{equation*}
c_j:=\frac{\|u_{\infty}(\cdot,k)\|_{\infty}}{\|\Phi_{k,{\infty}}(\cdot,z_j)\|_{\infty}} ,\quad j=1,2,
\end{equation*}
where $\|\cdot\|_{\infty}=\|\cdot\|_{L^{\infty}(\Omega)}$ and $\Phi_{k,{\infty}}(\hat {x},z_j)=\gamma_m \mathrm{e}^{-\mathrm{i}k \hat {x} \cdot z_j}$ denotes the far-field pattern of fundamental solution at points $z_j$. Thus, the scaling factors $c_j$ take the following form
\begin{equation}\label{eq:scaling_factor}
c_j=\frac{\|u_{\infty}(\cdot,k)\|_{\infty}}{|\gamma_m|} , \quad j=1,2.
\end{equation}

\begin{figure}\label{fig:reference_source}
\centering
\includegraphics[width=0.6\textwidth]{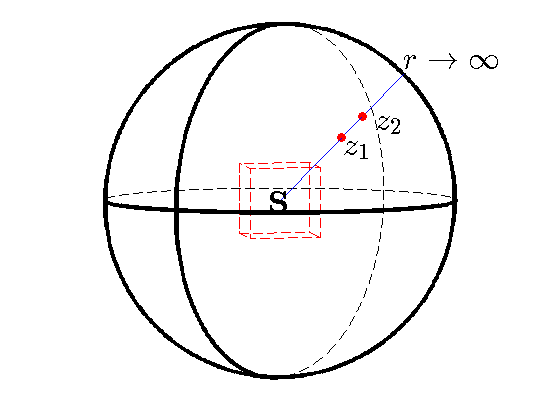}
   \caption{An illustration of the reference source technique, where $z_1$ and $z_2$ denote two reference point sources.}\label{fig:geometry}
\end{figure}

We refer to Figure \ref{fig:reference_source} for an illustration of geometrical configuration for the reference source technique. Now $ \phi_j(x, k):=-c_j\Phi_k(x, z_j)$ satisfies the following inhomogeneous Helmholtz equation
\begin{equation*}
\Delta \phi_j+k^2 \phi_j=c_j \delta_j \quad \text{in}\ \mathbb{R}^m.
\end{equation*}
By the linearity of wave equations, $v_j=u+\phi_j (j=1,2)$ is the unique solution to the problem
\begin{equation*}
\begin{cases}
\Delta v_j+k^2 v_j=S+c_j \delta_j \quad \mathrm{in}\ \mathbb {R}^m,\\
\lim\limits_{r\to \infty}  r^{\frac{m-1}{2}}\left(\dfrac{\partial v_j}{\partial r}-\mathrm{i}k v_j \right)=0, \quad r=|x|.
\end{cases}
\end{equation*}

We denote by $v_{j,\infty}$ and $\phi_{j, \infty}$ the far-field pattern corresponding to $v_j$ and $\phi_j$, respectively. The linearity again leads to
$$
v_{j,\infty}=u_{\infty}+\phi_{j,\infty}=u_{\infty}-c_j\Phi_{k,\infty}=u_{\infty}-c_j\gamma_m \mathrm{e}^{-\mathrm{i}k\hat {x}\cdot z_j}, \quad j=1,2.
$$

The phase retrieval technique in this paper depends on the frequency and is combined with the Fourier method \cite{WGZL17}, so we need to recall the following definition of admissible frequencies \cite{WGZL17}.
\begin{definition}[Admissible wavenumbers]
	Let $N \in \mathbb{N}_+$ and  $k^* \in \mathbb{R}_+$ be a sufficiently small wavenumber. Then the admissible set of wavenumbers is given by
\begin{equation*}
\mathbb{K}_N:=\left\{\frac{2\pi}{a} |\bm l|: \bm l \in \mathbb{Z}^m , 1 \leq |\bm{l}|_\infty\leq N\right\}\cup\{k^* \}.
\end{equation*}
where $m=2,3$, $k^*=2\pi\lambda/a$ and $\lambda$ is a sufficiently small positive constant.
\end{definition}

We now introduce the phase retrieval problem under consideration.
\begin{problem}[Phase retrieval]
Let $v_{j, \infty}$ be the far-field pattern corresponding to the radiated field $v_j, j=1,2$. Given $N\in \mathbb{N}_+$ and the phaseless far-field data
\begin{align*}
& \{|u_{\infty}(\hat x,k)|:\hat x\in \Omega, k\in \mathbb{K}_N\},\\
& \{|v_{j,{\infty}}(\hat x,k)|:\hat x\in \Omega, k\in \mathbb{K}_N\},\quad j=1,2.
\end{align*}
recover the phased data $\{u_{\infty}(\hat x, k):\hat x\in \Omega, k\in \mathbb{K}_N\}$.
\end{problem}

For simplicity, we denote $u_\infty=u_{\infty}(\hat x,k)$, $v_{j,\infty}=v_{j,{\infty}}(\hat x,k)$. Let $t_{k,j}:=k\hat {x}\cdot z_j$, then one can easily derive
\begin{align*}
	\Re v_{j,\infty}= & \Re u_{\infty}-c_j\Re\gamma_m\cos t_{k,j}-c_j\Im\gamma_m\sin t_{k,j}, \quad j=1,2, \\
	\Im v_{j,\infty}= & \Im u_{\infty}+c_j\Re\gamma_m\sin t_{k,j}-c_j\Im\gamma_m\cos t_{k,j}, \quad j=1,2.
\end{align*}
Consequently, we obtain the following equations:
\begin{align}
|u_{\infty}|^2=&(\Re u_{\infty})^2+(\Im u_{\infty})^2, \label{eq:modulus_u} \\
|v_{j,{\infty}}|^2=&(\Re u_{\infty}-c_j\Re\gamma_m\cos t_{k,j}-c_j\Im\gamma_m\sin t_{k,j})^2\notag \\
&+(\Im u_{\infty}+c_j\Re\gamma_m\sin t_{k,j}-c_j\Im\gamma_m\cos t_{k,j})^2, \quad j=1,2. \label{eq:modulus_v}
\end{align}
Further, by subtracting \eqref{eq:modulus_u} from \eqref{eq:modulus_v}, we get
\begin{align}
f_j=&(-\Im\gamma_m \sin t_{k, j}-\Re\gamma_m \cos t_{k, j})\Re u_{\infty} \notag \\
&+(\Re\gamma_m\sin t_{k, j}-\Im\gamma_m\cos t_{k, j})\Im u_{\infty}, \label{eq:eqn_system}
\quad j=1,2,
\end{align}
where
\begin{equation*}
f_j=\frac{1}{2c_j}(|v_{j,\infty}|^2-|u_\infty|^2-c_j^2|\gamma_m|^2) , \quad j=1,2.
\end{equation*}

Thus, we can derive the phase retrieval formula as follows:
\begin{equation}\label{eq:retrieval_formula}
\Re u_{\infty}=\frac{\det A^R}{\det A},\quad
\Im u_{\infty}=\frac{\det A^I}{\det A},
\end{equation}
where the function matrices $A^R$, $A^I$ and $A$, are defined as follows
\begin{align*}
	A^R = &
	\begin{pmatrix}
		f_1 & \Re\gamma_m\sin t_{k,1}-\Im\gamma_m\cos t_{k,1}  \\
		f_2 & \Re\gamma_m\sin t_{k,2}-\Im\gamma_m\cos t_{k,2}  \\
	\end{pmatrix}, \\
	A^I = &
	\begin{pmatrix}
		-\Im\gamma_m\sin t_{k,1}-\Re\gamma_m\cos t_{k,1} & f_1  \\
		-\Im\gamma_m\sin t_{k,2}-\Re\gamma_m\cos t_{k,2} & f_2  \\
	\end{pmatrix}, \\
	A = &
	\begin{pmatrix}
		-\Im\gamma_m\sin t_{k,1}-\Re\gamma_m\cos t_{k,1} & \Re\gamma_m\sin t_{k,1}-\Im\gamma_m\cos t_{k,1}  \\
		-\Im\gamma_m\sin t_{k,2}-\Re\gamma_m\cos t_{k,2} & \Re\gamma_m\sin t_{k,2}-\Im\gamma_m\cos t_{k,2}  \\
	\end{pmatrix}.
\end{align*}
Hence, the far-field can be recovered from $u_{\infty}=\Re u_{\infty}+\mathrm{i}\Im u_{\infty}$.

In this paper, we choose the parameters $\alpha_j$ for $j=1,2$ as follows:
\begin{equation}\label{eq:parameters}
k^*=\frac{\pi}{9a}, \quad \alpha_1=\frac{1}{2} \quad \text{and}\quad
\begin{cases}
\alpha_2=\dfrac{1}{2}-\dfrac{\pi}{2k}, & \text{if}\quad k \in  \mathbb{K}_N\backslash\{k^*\}, \vspace{2mm}  \\
\alpha_2=-4,  & \text{if} \quad k=k^*.
\end{cases}
\end{equation}

In the next section, we will show that the choice of the parameters \eqref{eq:parameters} will make the denominators in \eqref{eq:retrieval_formula} nonzero and thus the equation system \eqref{eq:eqn_system} can be uniquely solvable.

In practice, the collected data should always be polluted by measurement noise. Therefore, we state the algorithm with perturbed data at the end of this section.
\begin{table}[ht]
\centering
\begin{tabular}{cp{.8\textwidth}}
\toprule
\multicolumn{2}{l}{{\bf Algorithm PR:}\quad Phase retrieval with reference point sources} \\
\midrule
{\bf Step 1} & Take the parameters $\alpha_j$ for $j=1,2$ as in \eqref{eq:parameters}; \\
{\bf Step 2} & Measure the noisy phaseless far-field data $\{|u_{\infty}^{\epsilon}(\hat x,k)|:\hat x \in \Omega,  k\in \mathbb{K}_N \}$ and evaluate the scaling factors $c_j$ for $j=1,2$; \\
{\bf Step 3} & Add the reference point sources $c_j\delta_j, j=1,2$, respectively,
 to the inverse source system $S$, and detect the phaseless far-field data $\{|v_{j,\infty}^{\epsilon}(\hat x,k)|:\hat x \in \Omega,  k\in \mathbb{K}_N \}$
 for $j=1,2$; \\
{\bf Step 4} & Recover the far-field pattern $\{u_{\infty}^{\epsilon}(\hat x, k): \hat x \in \Omega,  k\in \mathbb{K}_N \}$ for $j=1,2$ from formula \eqref{eq:retrieval_formula}. \\
\bottomrule
\end{tabular}
\end{table}

\section{Stability of phase retrieval technique}\label{sec:stability}

Because of the similarity of the two cases ($m=2$ and $m=3$), in this section, we only analyze the stability of the phase retrieval method in the two-dimensional case. First, we derive an estimate on $\det A$, which will be crucial in our subsequent stability analysis.
\begin{theorem}\label{thm:lower_bound}
Let $k \in \mathbb{K}_N$, then the following result holds
$$
|\det A|=
\begin{cases}
\dfrac{1}{8\pi k},\quad &  k \in \mathbb{K}_N\backslash\ \{k^*\}, \vspace{2mm} \\
\dfrac{9}{8\pi^2}, \quad &  k=k^*.
\end{cases}
$$
\end{theorem}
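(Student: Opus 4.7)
The plan is to compute $\det A$ in closed form by direct expansion of the $2\times 2$ determinant, reduce it to a single trigonometric expression, and then evaluate that expression on the two parameter regimes prescribed by \eqref{eq:parameters}.

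Abbreviate $p := \Re\gamma_m$, $q := \Im\gamma_m$, and $t_j := t_{k,j}$ for $j=1,2$. Then
\[
\det A = (-q\sin t_1 - p\cos t_1)(p\sin t_2 - q\cos t_2) - (-q\sin t_2 - p\cos t_2)(p\sin t_1 - q\cos t_1).
\]
Expanding each product yields eight monomials, but the four terms proportional to $pq$ (those carrying the pairs $\sin t_1\sin t_2$ and $\cos t_1\cos t_2$) cancel across the two products. What survives collapses, via the angle-subtraction identity, to
\[
\det A = (p^2+q^2)\bigl(\sin t_1\cos t_2 - \cos t_1\sin t_2\bigr) = |\gamma_m|^2\,\sin(t_{k,1}-t_{k,2}).
\]
In two dimensions, the closed form of $\gamma_m$ immediately gives $|\gamma_m|^2 = 1/(8\pi k)$.

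Next, I would translate $t_{k,1}-t_{k,2}$ into the parameters $\alpha_j$. Since $z_j=\alpha_j\hat x$ and $\hat x$ is a unit vector, $t_{k,j}=k\alpha_j$, so $t_{k,1}-t_{k,2}=k(\alpha_1-\alpha_2)$. Inserting the values from \eqref{eq:parameters}: for $k\in\mathbb{K}_N\setminus\{k^*\}$ one has $\alpha_1-\alpha_2=\pi/(2k)$, hence $\sin(t_{k,1}-t_{k,2})=\sin(\pi/2)=1$ and $|\det A|=1/(8\pi k)$. For $k=k^*$ one has $\alpha_1-\alpha_2=9/2$, so $k^*(\alpha_1-\alpha_2)=9k^*/2$; combining $\sin(9k^*/2)$ with the value of $|\gamma_m|^2$ at $k=k^*$ produces the stated constant $9/(8\pi^2)$.

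The only nontrivial step is the bookkeeping in the determinant expansion, namely noticing that the $pq$-cross terms cancel in pairs so that $\det A$ factors as $|\gamma_m|^2\sin(t_{k,1}-t_{k,2})$; once that clean form is in hand, specialization to the two parameter regimes is mechanical. No deep machinery is needed — only the formulas for $\gamma_m$ in two dimensions, the definition of $t_{k,j}$, and one application of the sine subtraction identity.
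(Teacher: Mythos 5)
Your proposal is correct and follows essentially the same route as the paper: expand the $2\times 2$ determinant, note that the cross terms in $\Re\gamma_m\,\Im\gamma_m$ cancel so that $\det A=|\gamma_m|^2\sin(t_{k,1}-t_{k,2})=|\gamma_m|^2\sin\bigl(k(\alpha_1-\alpha_2)\bigr)$, and then substitute the parameter choices \eqref{eq:parameters}, where in both regimes the sine equals $1$ and $|\gamma_2|^2=1/(8\pi k)$ yields the stated values.
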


\begin{proof}
According to the definition of matrix $A$, $t_{k,j}=k\hat {x} \cdot z_j$ and $z_j=\alpha_j \hat x$ for $j=1,2$, we have
\begin{align*}
\det A = & (-\Im\gamma_m\sin t_{k,1}-\Re\gamma_m\cos t_{k,1})(\Re\gamma_m\sin t_{k,2}-\Im\gamma_m\cos t_{k,2}) \\
&-(\Re\gamma_m\sin t_{k,1}-\Im\gamma_m\cos t_{k,1})(-\Im\gamma_m\sin t_{k,2}-\Re\gamma_m\cos t_{k,2}) \\
=& \sin t_{k,1}\cos t_{k,2}(\Im\gamma_m)^2-\cos t_{k,1}\sin t_{k,2}(\Re\gamma_m)^2 \\
&+\sin t_{k,1}\cos t_{k,2}(\Re\gamma_m)^2-\cos t_{k,1}\sin t_{k,2}(\Im\gamma_m)^2 \\
=& |\gamma_m|^2(\sin t_{k,1}\cos t_{k,2}-\cos t_{k,1}\sin t_{k,2}) \\
=& |\gamma_m|^2\sin(t_{k,1}-t_{k,2}) \\
=& |\gamma_m|^2|\sin(k(\alpha_1-\alpha_2))|.
\end{align*}
Thus, by the parameters in \eqref{eq:parameters}, one easily calculates that
\begin{equation*}
|\det A|=\dfrac{1}{8\pi k}.
\end{equation*}

In the case $k=k^{*}$, one immediately has $k=\dfrac{\pi}{9}$, and
\begin{equation*}
|\det A|=\dfrac{9}{8\pi^2}.
\end{equation*}
This completes the proof.
\end {proof}

Now we move on to the stability of the phase retrieval formula. Given a fixed $k$ and $j$, we consider the perturbed equation system for the unknowns $\Re u_\infty^{\epsilon}$ and $\Im u_\infty^{\epsilon}$:
\begin{equation}\label{eq:perturbed_system}
(-\Im\gamma_m\sin t_{k, j}-\Re\gamma_m \cos t_{k, j})\Re u_{\infty}^{\epsilon}
+(\Re\gamma_m\sin t_{k, j}-\Im\gamma_m\cos t_{k, j})\Im u_{\infty}^{\epsilon}= f_j^{\epsilon},
\end{equation}
where
\begin{equation}\label{eq:perturbed_f_c}
f_j^{\epsilon}=\dfrac{1}
{2c_j^{\epsilon}}(|v_{j,\infty}^{\epsilon}|^2-|u_{\infty}^{\epsilon}|^2-(c_j^{\epsilon})^2|\gamma_m|^2),\quad c_j^{\epsilon}=\dfrac{\| u_{\infty}^{\epsilon}(\cdot,k)\|_\infty}{|\gamma_m|},
\quad j=1,2. 
\end{equation}
Here $u_{\infty}^{\epsilon}$ and $v_{j,\infty}^{\epsilon}$ are measured noisy data satisfying
\begin{equation}\label{eq:noisy_data}
\| |u_{\infty}^{\epsilon}|-|u_{\infty}| \|_{\infty} \leq \epsilon\|u_{\infty}\|_{\infty},\quad \| |v_{j,\infty}^{\epsilon}|-|\hat{v}_{j,\infty}| \|_{\infty} \leq \epsilon\|\hat{v}_{j,\infty}\|_{\infty},
\end{equation}
where $0<\epsilon<1$, $j=1,2$, and
$\hat{v}_{j,\infty}=u_{\infty}(\cdot, k)-c_j^{\epsilon}\Phi_{k,\infty}(\cdot,z_j)$. It can be easily seen that the solutions to the perturbed equations can also be derived by \eqref{eq:eqn_system} with $f_j^\epsilon$ in place of $f_j(j=1,2)$. The main stability result is the following.
\begin{theorem}
Under the above assumptions, we have the estimate
\begin{equation}\label{eq:estimate_u}
\|u_{\infty}^{\epsilon}-u_{\infty} \|_{\infty}\leq C_\epsilon \|u_{\infty}\|_{\infty},
\end{equation}
where
$$
C_\epsilon = 2\epsilon\dfrac{(\epsilon+3)(\epsilon+2)^2+6}{1-\epsilon}.
$$
\end{theorem}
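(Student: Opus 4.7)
The plan is to exploit the fact that both $(\Re u_\infty,\Im u_\infty)$ and $(\Re u_\infty^\epsilon,\Im u_\infty^\epsilon)$ solve $2\times 2$ linear systems that share the \emph{same} coefficient matrix $A$; only the right-hand sides $f_j$ and $f_j^\epsilon$ differ. Subtracting the two systems and applying Cramer's rule, the stability question reduces to (i) a lower bound on $|\det A|$, already furnished by Theorem \ref{thm:lower_bound}, and (ii) an upper bound on $\max_j|f_j^\epsilon-f_j|$ in terms of $\|u_\infty\|_\infty$ and the noise level $\epsilon$.

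For step (i)-to-(ii), I would first note that every entry of $A$ has the form $\alpha\sin t_{k,j}+\beta\cos t_{k,j}$ with $\alpha^2+\beta^2=|\gamma_m|^2$, hence $|A_{ij}|\le|\gamma_m|$. Cramer's rule then gives, pointwise in $\hat x$,
\begin{equation*}
|\Re u_\infty^\epsilon-\Re u_\infty|,\ |\Im u_\infty^\epsilon-\Im u_\infty|\ \le\ \frac{2|\gamma_m|}{|\det A|}\max_{j=1,2}|f_j^\epsilon-f_j|,
\end{equation*}
so that $|u_\infty^\epsilon-u_\infty|\le(4|\gamma_m|/|\det A|)\max_j|f_j^\epsilon-f_j|$.

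The key algebraic identity for estimating $|f_j^\epsilon-f_j|$ is that $f_j=\hat f_j$, where $\hat f_j:=\frac{1}{2c_j^\epsilon}\bigl(|\hat v_{j,\infty}|^2-|u_\infty|^2-(c_j^\epsilon)^2|\gamma_m|^2\bigr)$. This is immediate from the derivation of \eqref{eq:eqn_system}: that derivation produces a linear relation in $(\Re u_\infty,\Im u_\infty)$ whose right-hand side is manifestly independent of the scaling factor, so replacing $c_j$ by $c_j^\epsilon$ (and $v_{j,\infty}$ by $\hat v_{j,\infty}$) gives the same linear functional. Thus $|f_j^\epsilon-f_j|=|f_j^\epsilon-\hat f_j|$ and can be bounded using only the noise hypotheses \eqref{eq:noisy_data}. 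Writing
\begin{equation*}
2c_j^\epsilon(f_j^\epsilon-\hat f_j)=\bigl(|v_{j,\infty}^\epsilon|^2-|\hat v_{j,\infty}|^2\bigr)-\bigl(|u_\infty^\epsilon|^2-|u_\infty|^2\bigr),
\end{equation*}
applying $\bigl||a|^2-|b|^2\bigr|\le(|a|+|b|)\bigl||a|-|b|\bigr|$, and using the triangle-inequality bounds $\|u_\infty^\epsilon\|_\infty\le(1+\epsilon)\|u_\infty\|_\infty$ and $\|\hat v_{j,\infty}\|_\infty\le\|u_\infty\|_\infty+c_j^\epsilon|\gamma_m|\le(2+\epsilon)\|u_\infty\|_\infty$, together with the lower bound $c_j^\epsilon\ge(1-\epsilon)\|u_\infty\|_\infty/|\gamma_m|$, produces an estimate of the form $|f_j^\epsilon-f_j|\le\frac{\epsilon\,p(\epsilon)}{1-\epsilon}|\gamma_m|\|u_\infty\|_\infty$ for an explicit polynomial $p$ in $\epsilon$. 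Combining with the Cramer bound above, and invoking Theorem \ref{thm:lower_bound} so that $|\gamma_m|^2/|\det A|$ becomes a harmless constant, yields \eqref{eq:estimate_u} with the claimed $C_\epsilon$.

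The main obstacle is bookkeeping: the noise hypothesis \eqref{eq:noisy_data} compares $|v_{j,\infty}^\epsilon|$ with $|\hat v_{j,\infty}|$ (exact far field with the \emph{perturbed} scaling $c_j^\epsilon$) rather than with the exact $|v_{j,\infty}|$, so one must be careful not to tacitly use $v_{j,\infty}$ anywhere. The identity $\hat f_j=f_j$ is precisely what lets us bypass the mismatch between $c_j$ and $c_j^\epsilon$ without introducing extra error terms; once that is in place, the remaining work is routine polynomial-in-$\epsilon$ accounting aimed at matching the exact form of $C_\epsilon$, with the factor $1/(1-\epsilon)$ arising solely from the lower bound on $c_j^\epsilon$.
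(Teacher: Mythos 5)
Your proposal is correct and shares the paper's overall skeleton (subtract the exact and perturbed $2\times 2$ systems, apply Cramer's rule, and invoke Theorem \ref{thm:lower_bound}), but the way you bound the right-hand-side perturbation is genuinely different and, in fact, cleaner. The paper compares $f_j^\epsilon$ (built from $c_j^\epsilon, v_{j,\infty}^\epsilon, u_\infty^\epsilon$) directly with $f_j$ (built from $c_j, v_{j,\infty}, u_\infty$), which forces it to estimate $|c_j^\epsilon-c_j|$, pass from $|\hat v_{j,\infty}|$ to $|v_{j,\infty}|$, and control terms like $|v_{j,\infty}|^2\left|\tfrac{1}{c_j^\epsilon}-\tfrac{1}{c_j}\right|$, arriving at $|f_j^\epsilon-f_j|\le \eta_\epsilon\|u_\infty\|_\infty|\gamma_m|$ with $\eta_\epsilon=\epsilon\tfrac{(\epsilon+3)(\epsilon+2)^2+6}{2(1-\epsilon)}$ and then $C_\epsilon=4\eta_\epsilon$. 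Your observation that $f_j=\hat f_j$ (both equal $-\Re\bigl(u_\infty\overline{\gamma_m e^{-\mathrm{i}t_{k,j}}}\bigr)$, independently of the scaling factor) lets you compare only quantities sharing the same $c_j^\epsilon$, so the noise hypotheses \eqref{eq:noisy_data} apply verbatim and the $c_j$-versus-$c_j^\epsilon$ mismatch never enters; the resulting bound $|f_j^\epsilon-f_j|\le \tfrac{\epsilon(2+\epsilon)\left[(2+\epsilon)^2+1\right]}{2(1-\epsilon)}|\gamma_m|\,\|u_\infty\|_\infty$ is strictly smaller than $\eta_\epsilon|\gamma_m|\|u_\infty\|_\infty$, so your final constant $\tfrac{2\epsilon(2+\epsilon)\left[(2+\epsilon)^2+1\right]}{1-\epsilon}$ is sharper than $C_\epsilon$ and the stated estimate \eqref{eq:estimate_u} follows a fortiori. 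Two small points to make the write-up airtight: state explicitly that under the parameter choice \eqref{eq:parameters} one has $|\det A|=|\gamma_m|^2$ for every $k\in\mathbb{K}_N$ (this is exactly what Theorem \ref{thm:lower_bound} gives, since $|\gamma_m|^2=\tfrac{1}{8\pi k}$), rather than calling $|\gamma_m|^2/|\det A|$ a ``harmless constant''; this also lets your argument treat $k=k^*$ and $k\ne k^*$ uniformly, avoiding the paper's separate numerical evaluation in Case (ii). And note that your bound $\|u_\infty^\epsilon\|_\infty\le(1+\epsilon)\|u_\infty\|_\infty$ uses the measured modulus data in \eqref{eq:noisy_data}, which is how the paper uses it as well, so no additional assumption is needed.
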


\begin{proof}
By $v_{j,\infty}(\cdot,k)=u_\infty(\cdot,k)-c_j\Phi_{k,\infty}(\cdot,z_j)$ and
$\|\Phi_{k,\infty}(\cdot,z_j) \|_\infty = |\gamma_m|$, \eqref{eq:scaling_factor}, \eqref{eq:perturbed_f_c} and \eqref{eq:noisy_data}, we deduce that
\begin{equation}\label{eq:estimate_c}
|c_j^{\epsilon}-c_j|\leq \dfrac{\epsilon \|u_{\infty}(\cdot,k)\|_{\infty}}{|\gamma_m|}=\epsilon c_j,
\end{equation}
and
\begin{align}
\| |v_{j,\infty}^{\epsilon}|-|v_{j,\infty}| \|_{\infty}
& \leq \| |v_{j,\infty}^{\epsilon}|-|\hat {v}_{j,\infty}| \|_{\infty}+\| |\hat {v}_{j,\infty}|-|v_{j,\infty}| \|_{\infty} \notag \\
& \leq \epsilon\|\hat {v}_{j,\infty}\|_{\infty}+\| (c_j^{\epsilon}-c_j)\Phi_{k,\infty}(\cdot, z_j) \|_{\infty} \notag \\
& \leq \epsilon\|\hat {v}_{j,\infty}\|_{\infty}+\epsilon c_j |\gamma_m| \notag \\
& \leq \epsilon \|u_{\infty}(\cdot,k) \|_{\infty}+\epsilon(c_j+c_j^{\epsilon})|\gamma_m| \notag \\
& =\epsilon(2\|u_{\infty}(\cdot,k) \|_{\infty}+\|u_{\infty}^{\epsilon}(\cdot,k) \|_{\infty})\notag \\
& \leq \epsilon(\epsilon+3) \|u_{\infty}(\cdot,k) \|_{\infty}. \label{eq:estimate_v}
\end{align}
Further, by using \eqref{eq:perturbed_f_c}, \eqref{eq:estimate_c} and \eqref{eq:estimate_v}, we derive
\begin{align*}
\left|  \frac{|v_{j,\infty}^{\epsilon}|^2}{c_j^{\epsilon}}-\frac{|v_{j,\infty}|^2}{c_j} \right|
 \leq & \frac{1}{c_j^{\epsilon}}\left| |v_{j,\infty}^{\epsilon}|^2-|v_{j,\infty}|^2 \right|+|v_{j,\infty}|^2\left|\frac{1}{c_j^{\epsilon}}-\frac{1}{c_j}\right|\\
 \leq &\frac{\epsilon(\epsilon+3)}{c_j^{\epsilon}} \|u_{\infty}(\cdot, k) \|_{\infty} \left(|v_{j,\infty}^{\epsilon}|+|v_{j,\infty}|\right)\\
&+\dfrac{|c_j^{\epsilon}-c_j|}{c_j^{\epsilon}c_j}|v_{j,\infty}|^2\\
 \leq &\frac{\epsilon(\epsilon+3)(\epsilon+2)^2}{c_j^{\epsilon}}\|u_{\infty}(\cdot, k) \|_{\infty}^2+\frac{\epsilon}{c_j^{\epsilon}}\|v_{j,\infty}\|_{\infty}^2\\
 \leq &\frac{\epsilon((\epsilon+3)(\epsilon+2)^2+2)}{(1-\epsilon)c_j}\|u_{\infty}(\cdot, k) \|_{\infty}^2\\
 =& \frac{\epsilon((\epsilon+3)(\epsilon+2)^2+2)}{1-\epsilon}\|u_{\infty}(\cdot, k) \|_{\infty} |\gamma_m|.
\end{align*}

Analogously, we can get
\begin{align*}
\left|  \frac{|u_{\infty}^{\epsilon}|^2}{c_j^{\epsilon}}-\frac{|u_{\infty}|^2}{c_j} \right|
\leq & \frac{\epsilon(\epsilon+3)}{1-\epsilon}\|u_{\infty}(\cdot, k) \|_{\infty}|\gamma_m|,\\
|c_j^{\epsilon}-c_j||\gamma_m|^2 \leq & \epsilon\|u_{\infty}(\cdot, k) \|_{\infty}|\gamma_m|.
\end{align*}
The triangle inequality implies that
\begin{equation}\label{eq:estimate_f}
|f_j^{\epsilon}-f_j|\leq \eta_\epsilon\|u_{\infty}(\cdot, k)\|_{\infty}|\gamma_m|, \quad j=1,2, 
\end{equation}
where
$$
\eta_\epsilon:=\epsilon\dfrac{(\epsilon+3)(\epsilon+2)^2+6}{2(1-\epsilon)}.
$$

According to the admissible wavenumbers, the subsequent proof is divided into two cases.

Case (i): $k\in\mathbb{K}_N\backslash\{k^*\}$.

It is clear to see that, for $j=1,2$
\begin{align*}
& |\Re\gamma_m \sin t_{k, j}-\Im\gamma_m \cos t_{k, j}|
\leq |\Re\gamma_m|+|\Im\gamma_m| \leq \sqrt{2} |\gamma_m|, \\
& |\Im\gamma_m \sin t_{k, j}+\Re\gamma_m \cos t_{k, j}|
\leq |\Im\gamma_m|+|\Re\gamma_m| \leq \sqrt{2} |\gamma_m|,
\end{align*}
which, together with \eqref{eq:estimate_f}, imply that
\begin{align}
& |(\Re\gamma_m \sin t_{k,2}-\Im\gamma_m \cos t_{k,2})(f_1^\epsilon-f_1) \notag \\
& -(\Re\gamma_m \sin t_{k,1}-\Im\gamma_m \cos t_{k,1})(f_2^\epsilon-f_2)| \notag \\
\leq & \sqrt{2} |\gamma_m|\eta_\epsilon\|u_{\infty}(\cdot, k) \|_{\infty}|\gamma_m|+
\sqrt{2}|\gamma_m|\eta_\epsilon\|u_{\infty}(\cdot, k) \|_{\infty}|\gamma_m| \notag \\
= & 2\sqrt{2}\eta_\epsilon |\gamma_m|^2\|u_{\infty}(\cdot, k) \|_{\infty}. \label{eq:estimate_i1}
\end{align}
and
\begin{align}
& |(\Im\gamma_m\sin\ t_{k,2}+\Re\gamma_m \cos t_{k,2})(f_1^\epsilon-f_1) \notag \\
& -(\Im\gamma_m\sin t_{k,1}+\Re\gamma_m \cos t_{k,1})(f_2^\epsilon-f_2)| \notag \\
 \leq & 2\sqrt{2}\eta_\epsilon |\gamma_m|^2\|u_{\infty}(\cdot, k)\|_{\infty}. \label{eq:estimate_i2}
\end{align}

Subtracting \eqref{eq:eqn_system} from \eqref{eq:perturbed_system} yields
\begin{align}
f_j^{\epsilon}-f_j = & -(\Im\gamma_m\sin t_{k,j}+\Re\gamma_m \cos t_{k,j})\Re (u_{\infty}^{\epsilon}-u_{\infty}) \notag \\
&+(\Re\gamma_m\sin t_{k,j}-\Im\gamma_m\cos t_{k,j})\Im (u_{\infty}^{\epsilon}-u_{\infty}),\quad j=1,2. \label{eq:subtraction}
\end{align}
Thus,
\begin{equation*}
 \Re(u_{\infty}^{\epsilon}-u_{\infty})=\frac{\det A^{R,\epsilon}}{\det A},\quad
 \Im(u_{\infty}^{\epsilon}-u_{\infty})=\frac{\det A^{I,\epsilon}}{\det A}.
\end{equation*}
where
\begin{align*}
	A^{R,\epsilon} = &
	\begin{pmatrix}
		f_1^{\epsilon}-f_1 & \Re\gamma_m\sin t_{k,1}-\Im\gamma_m\cos t_{k,1}  \\
		f_2^{\epsilon}-f_2 & \Re\gamma_m\sin t_{k,2}-\Im\gamma_m\cos t_{k,2}  \\
	\end{pmatrix}, \\
	A^{I,\epsilon} = &
	\begin{pmatrix}
		-\Im\gamma_m\sin t_{k,1} -\Re\gamma_m\cos t_{k,1} & f_1^{\epsilon}-f_1  \\
		-\Im\gamma_m\sin t_{k,2}-\Re\gamma_m\cos t_{k,2} & f_2^{\epsilon}-f_2  \\
	\end{pmatrix}.
\end{align*}
Then, in terms of Theorem \ref{thm:lower_bound}, together with \eqref{eq:estimate_i1} and  \eqref{eq:estimate_i2}, we derive
\begin{equation*}
\|u_{\infty}^{\epsilon}-u_{\infty} \|_\infty \leq C_\epsilon \|u_{\infty}\|_\infty,
\end{equation*}
where
\begin{equation*}
C_\epsilon =2\epsilon\dfrac{(\epsilon+3)(\epsilon+2)^2+6}{1-\epsilon}.
\end{equation*}

Case (ii): $k=k^*$.

In this condition, the parameters have the following values:
\begin{equation*}
k=\frac{\pi}{9},\quad \alpha_1=\frac{1}{2},\quad \alpha_2=-4,\quad
t_{k,1}=\frac{\pi}{18}, \quad  t_{k,2}=-\frac{4\pi}{9}.
\end{equation*}
Then
\begin{align*}
\Re\gamma_m\sin t_{k,1}-\Im\gamma_m\cos t_{k,1} \approx & 0.1937, \\
\Re\gamma_m\sin t_{k,2}-\Im\gamma_m\cos t_{k,2} \approx & -0.2766, \\
\Im\gamma_m\sin t_{k,1}+\Re\gamma_m\cos t_{k,1} \approx & 0.2766, \\
\Im\gamma_m\sin t_{k,2}+\Re\gamma_m\cos t_{k,2} \approx & 0.1937.
\end{align*}
Furthermore, from \eqref{eq:estimate_f}, we can derive
\begin{align}
& |(\Re\gamma_m \sin t_{k,2}-\Im\gamma_m \cos t_{k,2})(f_1^\epsilon-f_1) \notag \\
&-(\Re\gamma_m \sin t_{k,1}-\Im\gamma_m \cos t_{k,1})(f_2^\epsilon-f_2)| \notag \\
\leq & 0.4703 \eta_\epsilon\|u_{\infty}(\cdot,k) \|_{\infty}|\gamma_m| \notag \\
\leq & 0.1588 \eta_\epsilon\|u_{\infty}(\cdot,k) \|_{\infty}, \label{eq:estimate_ii1}
\end{align}
and
\begin{align}
& |(\Im\gamma_m\sin\ t_{k,2}+\Re\gamma_m \cos t_{k,2})(f_1^\epsilon-f_1) \notag \\
&-(\Im\gamma_m\sin t_{k,1}+\Re\gamma_m \cos t_{k,1})(f_2^\epsilon-f_2)| \notag \\
\leq & 0.4703 \eta_\epsilon\|u_{\infty}(\cdot,k) \|_{\infty}|\gamma_m| \notag \\
\leq & 0.1588 \eta_\epsilon\|u_{\infty}(\cdot,k) \|_{\infty}. \label{eq:estimate_ii2}
\end{align}
Then, by solving the equations \eqref{eq:subtraction} with \eqref{eq:perturbed_system}, \eqref{eq:estimate_ii1} and  \eqref{eq:estimate_ii2}, we have completed the proof.
\end{proof}

\section{Fourier method}\label{sec:Fourier_method}

Using the retrieved far-field data in the last section, the phaseless inverse source problem can be reformulated as the standard inverse source problem with phase information.
\begin{problem}[Multi-frequency ISP with far-field]\label{pro:inverse_source}
Given a finite number of frequencies $\{k\}$ and the corresponding far-field data $\{u_{\infty}(\hat x_k, k ): \hat x_k \in \Omega\}$, where $\hat x_k$ depends on the wavenumber $k$, find the source function $S(x)$.
\end{problem}
In this section, we will describe the Fourier method for solving Problem \ref{pro:inverse_source} in brief. For more details on the Fourier method, please see \cite{ZG15, WGZL17} for the acoustic case and \cite{WMGL18, WSGLL19} for the electromagnetic case. The Fourier method relies on the approximation of target source function $S(x)$ by a Fourier series of the form
\begin{equation}\label{eq:Fourier-expansion}
 S(x)=\sum_{\bm l\in \mathbb{Z}^m} \hat{s}_{\bm l}\, \phi_{\bm l}(x).
\end{equation}
where $\hat{s}_{\bm l}, \bm l\in \mathbb{Z}^m,$ are the Fourier coefficients and
\begin{equation*}
  \phi_{\bm l}(x)=\mathrm{exp}\left(\mathrm{i}\frac{2\pi}{a} \bm l \cdot x\right),\quad \bm l\in \mathbb{Z}^m,
\end{equation*}
are the Fourier basis functions.

Let 
\begin{equation*}
  \bm l_0:=
\begin{cases}
 \left(\dfrac{1}{18}, 0\right),     & m=2, \vspace{2mm} \\
 \left(\dfrac{1}{18}, 0, 0\right), & m=3,
\end{cases}
\end{equation*}
then the admissible wavenumbers are defined by
\begin{equation*}
  k_{\bm l}:=\left\{
\begin{aligned}
 & \frac{2\pi}{a}|\bm l|,    \quad \bm l \in \mathbb{Z}^m\backslash\{\bm 0\}, \\
 &  \frac{\pi}{9a}\lambda,   \quad \ \bm l =\bm 0.
\end{aligned}
\right.
\end{equation*}
Correspondingly, the admissible observation directions are given by
\begin{equation*}
  \hat x_{\bm l}:=\left\{
\begin{aligned}
 & \frac{\bm l}{|\bm l|},    \quad \ \bm l \in \mathbb{Z}^m\backslash\{\bm 0\}, \\
 &  \frac{\bm l_0}{|\bm l_0|}, \quad  \bm l =\bm 0.
\end{aligned}
\right.
\end{equation*}
In addition, following \cite{WGZL17}, the Fourier coefficients are given as follows
\begin{align*}
 &\hat{s}_{\bm l}=-\frac{1}{a^m \gamma_m}u_\infty(\hat{x};k_{\bm l}),\quad \bm l \in \mathbb{Z}^m\backslash\{\bm 0\}, \\
 & \hat{s}_{\bm 0} \approx -\frac{\lambda \pi}{a^m \mathrm{\sin}\lambda \pi}\left\{ \frac{u_\infty(\hat{x};k_{\bm 0 })}{\gamma_m} +  \sum_{1\leq|\bm l|_{\infty} \leq N} \hat{s}_{\bm {l}} \int_{D} \phi_{\bm {l}}(y) \overline{\phi_{\bm {l}_0}(y)} \, \mathrm{d} y \right\}.
\end{align*}

The implementation of the Fourier method is briefly stated in \textbf{Algorithm FM}.
\begin{table}[ht]
\centering
\begin{tabular}{cp{.8\textwidth}}
\toprule
\multicolumn{2}{l}{{\bf Algorithm FM:}\quad Fourier method for recovering the source } \\
\midrule
{\bf Step 1} & Select the parameters $\lambda, N $ and the admissible set $\mathbb{K}_N$; \\
{\bf Step 2} & Measure the noisy multi-frequency far-field data $\{u_{\infty}^{\epsilon}(\hat x_{\bm l}, k_{\bm l}):\hat x_{\bm l} \in \Omega,  k_{\bm l}\in \mathbb{K}_N \}$; \\
{\bf Step 3} & Compute the Fourier coefficients $\hat s_{\bm 0}$ and  $\hat s_{\bm l},\, \bm l \in \mathbb{Z}^m\backslash\{\bm 0\}$, and then a truncated series defined in \eqref{eq:Fourier-expansion} is the reconstruction of $S$. \\
\bottomrule
\end{tabular}
\end{table}

\section{Numerical experiments}\label{sec:numerics}

In this section, several two-and three-dimensional numerical experiments are carried out to demonstrate the feasibility and effectiveness of the proposed method. To avoid the inverse crime, we generate the synthetic far-field data by solving the forward problem via the quadratic finite elements method with a perfectly matched layer together with the Kirchhoff integral formula. The mesh of the finite element solver is successively refined till the relative error of the successive discrete solutions is below $0.1\%$. To test the stability of the method, we also add some uniformly distributed random noises to the phaseless far-field data. The perturbed phaseless data was given by:
\begin{equation*}
u_{\infty}^{\epsilon}:=(1+\epsilon r)|u_\infty|,
\end{equation*}
where $r\in[-1,1]$ is a random number and $\epsilon>0 $ represents the noise level.

To quantitatively evaluate the accuracy of the phase retrieval formula, we also estimate the relative error between the exact far-field data and the retrieval data. The discrete relative $L^2$ errors and $L^\infty$ errors are respectively calculated as follows:
$$
\frac{\left(\sum\limits_{i=1}^M |u_\infty(\hat x,k_i)-u_\infty^\epsilon(\hat x,k_i)|^2\right)^{1/2}}{\left(\sum\limits_{i=1}^M |u_\infty(\hat x,k_i)|^2\right)^{1/2}},\quad
\frac{\max\limits_{\hat x, k_i}\left|u_\infty(\hat x, k_i)-u_\infty^\epsilon(\hat x, k_i)\right|}{\max\limits_{\hat x, k_i}|u_\infty(\hat x, k_i)|},
$$
where $M$ denotes the number of frequencies, and $u_\infty(\hat x,k_i)$ and $u_\infty^\epsilon(\hat x, k_i)$ are the exact and retrieved far fields, respectively.
In what follows,  we first present some numerical validations of the phase retrieval technique, namely, Algorithm PR. Then we are going to image the source function with phased far field data by the Fourier method. Following \cite{WGZL17}, the truncation of the Fourier expansion is chosen as $N=2\lceil \epsilon^{-1/3} \rceil$ where the ceiling function $\lceil X \rceil$ denotes the largest integer that is smaller than $X+1$. Here, we use 1681 frequencies in two dimensions (68921 frequencies in three dimensions) and the same number of $\hat x$ to get the far field data $u_\infty(\hat x, k_i)$. For more implementation details, we refer to \cite{WGZL17}.

\subsection{Examples in two dimensions}

\begin{example}\label{E1}
\rm In the first example, we consider a mountain-shaped smooth source function 
\begin{align*}
   S_1(x_1,x_2)=&1.1\mathrm{exp}(-200((x_1-0.01)^2+(x_2-0.12)^2)) \\
                          &-100(x_2^2-x_1^2) \mathrm{exp}(-90(x_1^2+x_2^2)).
\end{align*}
\end{example}
We begin with the validation of phase retrieval algorithm. Figure \ref{fig.mountain-phase-retrieval} presents the  geometry setup of the phase retrieval technique for four typical wavenumbers. We would like to emphasize that the reference point sources depend on the measured direction $\hat x_{\bm l}$, where $\bm l\in \mathbb{Z}^m$. To show the accuracy of phase retrieval technique in Algorithm PR, we list the exact- and reconstructed phase data in Table \ref{tab:phase}. It is clear that the recovered phase data is point-wise convergent to the exact phase date. Furthermore, to show the global accuracy of the phase retrieval technique, we list the relative $L^2$ and $L^{\infty}$ errors in Table \ref{tab:test1}. It can be seen  that the error decreases as the noise level decreases and the phase retrieval procedure is quite accurate and stable.

  \begin{figure}
   \subfigure[]{\includegraphics[width=0.49\textwidth]{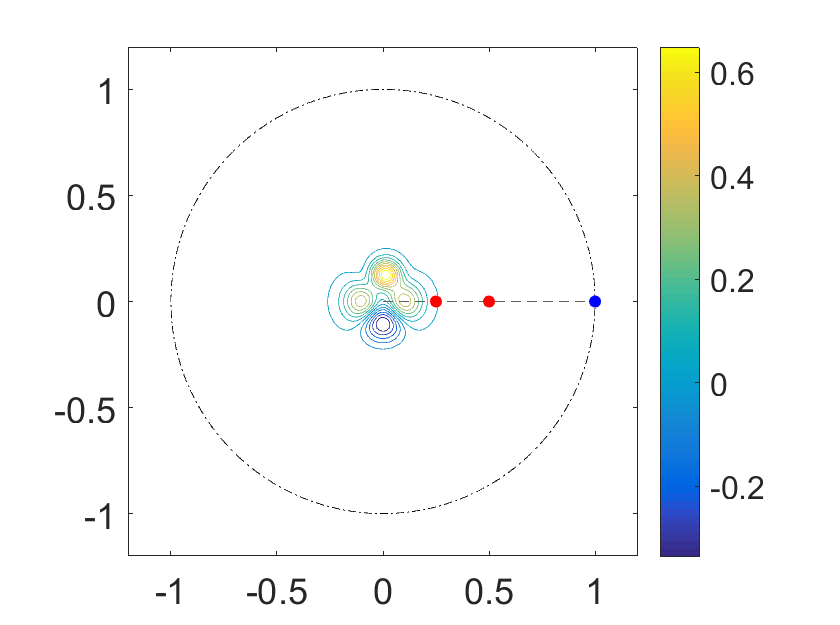}}
   \subfigure[]{\includegraphics[width=0.49\textwidth]{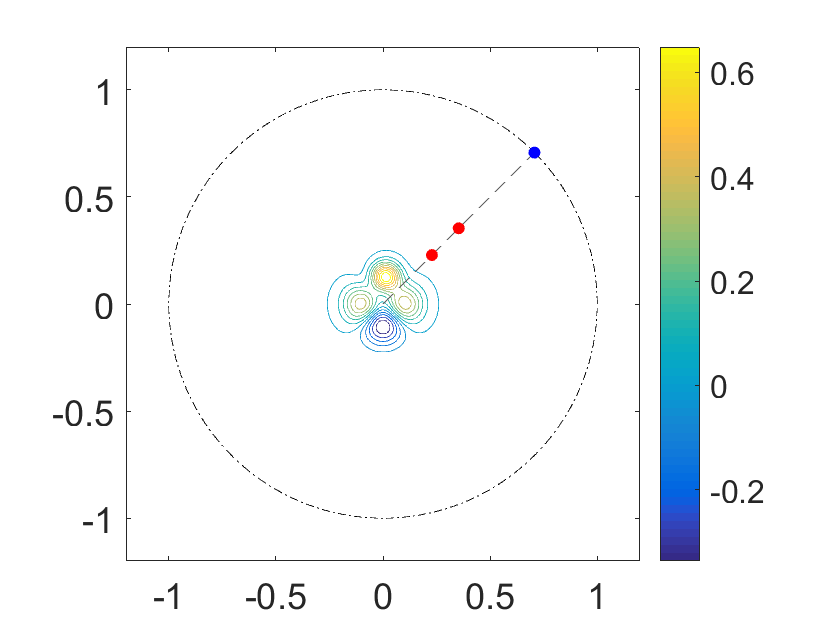}}\\
   \subfigure[]{\includegraphics[width=0.49\textwidth]{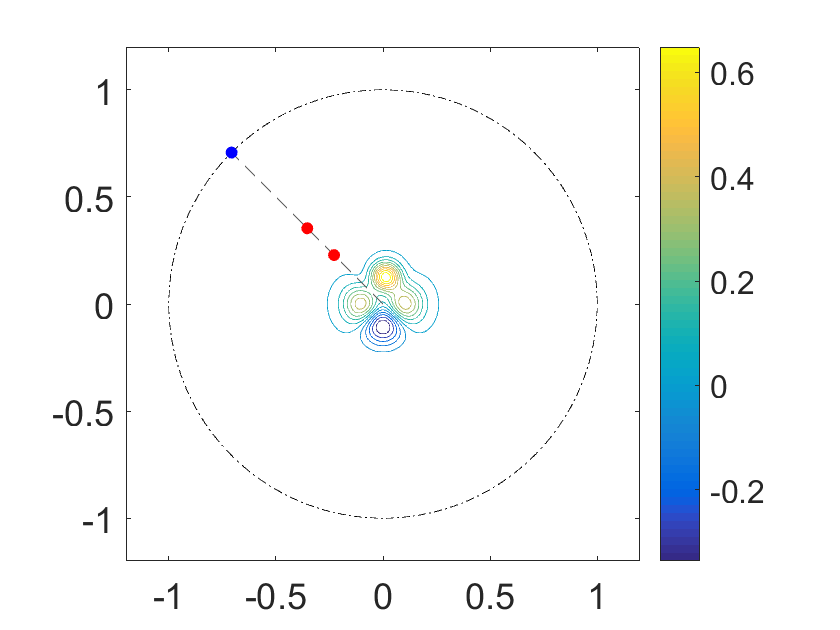}}
   \subfigure[]{\includegraphics[width=0.49\textwidth]{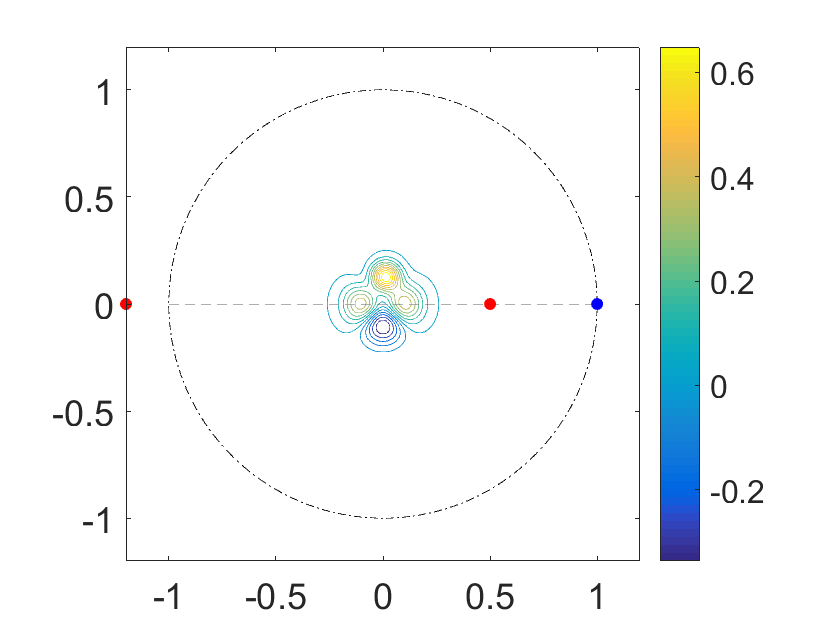}}
   \caption{Geometry illustration of the phase retrieval technique. The red points denote the reference point sources and the blue points denote observation directions. (a) $\bm l=(1,0)$, (b) $\bm l=(1,1)$,  (c) $\bm l=(-1,1)$,  (b) $\bm l=(0,0)$. }\label{fig.mountain-phase-retrieval}
\end{figure}

\begin{table}
\centering
 \caption{Phase retrieval for the mountain-shape source with $\epsilon=1\%$ noise.}\label{tab:phase}
 \begin{tabular}{ccccccc}
 \toprule
 $k$ & $\hat x$  & $\Re u_k^{\infty}(\hat x)$ & $\Im u_k^{\infty}(\hat x)$ 
                   & $\Re u_{k,\epsilon}^{\infty}(\hat x)$ & $ \Im u_{k,\epsilon}^{\infty}(\hat x)$ \\
  \midrule
  \multirow{4}*{$2\pi$}
  & $(1,0)$    & $-7.52\times 10^{-4} $  & $-6.38\times 10^{-4}$ & $-7.46\times 10^{-4} $  & $-6.35\times 10^{-4}$ \\
  & $(0,1)$    & $-1.49\times 10^{-3} $  & $-2.50\times 10^{-4}$ & $-1.51\times 10^{-3} $  & $-2.43\times 10^{-4}$ \\
  & $(-1,0)$   & $-6.38\times 10^{-4} $  & $-7.52\times 10^{-4}$ & $-6.44\times 10^{-4} $  & $-7.48\times 10^{-4}$ \\
  & $(0,-1)$   & $-2.50\times 10^{-4} $  & $-1.49\times 10^{-3}$ & $-2.51\times 10^{-4} $  & $-1.49\times 10^{-3}$ \\
  \midrule
  \multirow{4}*{$2\sqrt{2}\pi$}
  & $(\frac{\sqrt{2}}{2},\frac{\sqrt{2}}{2})$
                 & $-1.03\times 10^{-3} $  & $3.23\times 10^{-5}$
                 & $-1.03\times 10^{-3} $  & $- 1.86\times 10^{-5}$   \\
  & $(-\frac{\sqrt{2}}{2},\,\frac{\sqrt{2}}{2})$
                 & $-1.02\times 10^{-3} $  & $- 9.66\times 10^{-5}$
                 & $-1.04\times 10^{-3} $  & $- 1.33\times 10^{-4}$   \\
  & $(-\frac{\sqrt{2}}{2},\,-\frac{\sqrt{2}}{2})$
               & $3.23\times 10^{-5} $  & $- 1.03\times 10^{-3}$
               & $1.10\times 10^{-5} $  & $- 1.03\times 10^{-3}$   \\
  & $(\frac{\sqrt{2}}{2},\,-\frac{\sqrt{2}}{2})$
               & $-9.66\times 10^{-5} $  & $- 1.02\times 10^{-3}$
               & $-1.09\times 10^{-4} $  & $- 9.77\times 10^{-4}$   \\
   \midrule
  $\pi/9$  &$(1,0)$  & $ -7.39\times 10^{-3}$  & $ -7.37\times 10^{-3}$
                               & $ -7.07\times 10^{-3}$  & $ -7.04\times 10^{-3}$ \\
  \bottomrule
 \end{tabular}
\end{table}

\begin{table}[h]
 \centering
    \setlength{\abovecaptionskip}{0pt}
    \setlength{\belowcaptionskip}{10pt}
 \caption{\label{tab:test1}The relative $L^2$ errors and $L^{\infty}$  errors between $u_\infty (\hat x,k)$ and $u_\infty ^\epsilon(\hat x,k)$ for different noise levels $\epsilon $.}
 \begin{tabular}{lclclcl}
  \toprule
   $S_1(x)$ & $\epsilon=0.1\%$ & $\epsilon=1\%$ & $\epsilon=5\% $ &$\epsilon=10\%$ \\
  \midrule
  $L^2$         & $0.06\%$ & $0.59\%$ & $2.45\%$ &$5.77\%$ \\
  $L^{\infty}$ & $0.09\%$ & $0.91\%$ &$2.85\%$ & $7.74\%$ \\
  \bottomrule
 \end{tabular}
\end{table}

Next, we will use the recovered phased data to reconstruct the source function.
Figure \ref{fig.mountain-phase-retrieval} shows the contour and surface plots of the exact and reconstruction of $S_1$ with noise $\delta=0.1\% $ inside the rectangular domain $[-0.3,0.3]\times[-0.3,0.3]$.

\begin{figure}
	\centering
	\subfigure[]{\includegraphics[width=0.49\textwidth]{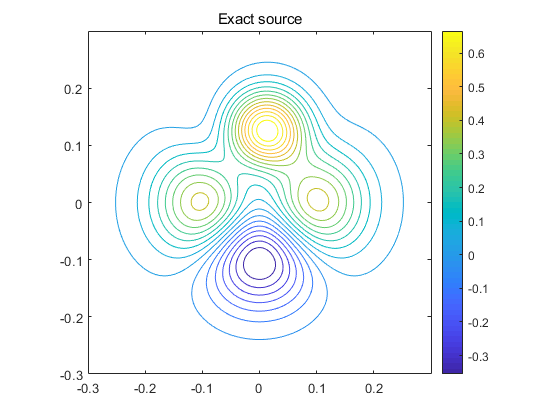}}
	\subfigure[]{\includegraphics[width=0.49\textwidth]{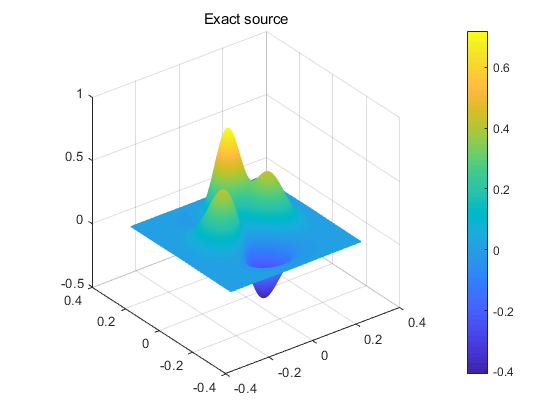}}
        \subfigure[]{\includegraphics[width=0.49\textwidth]{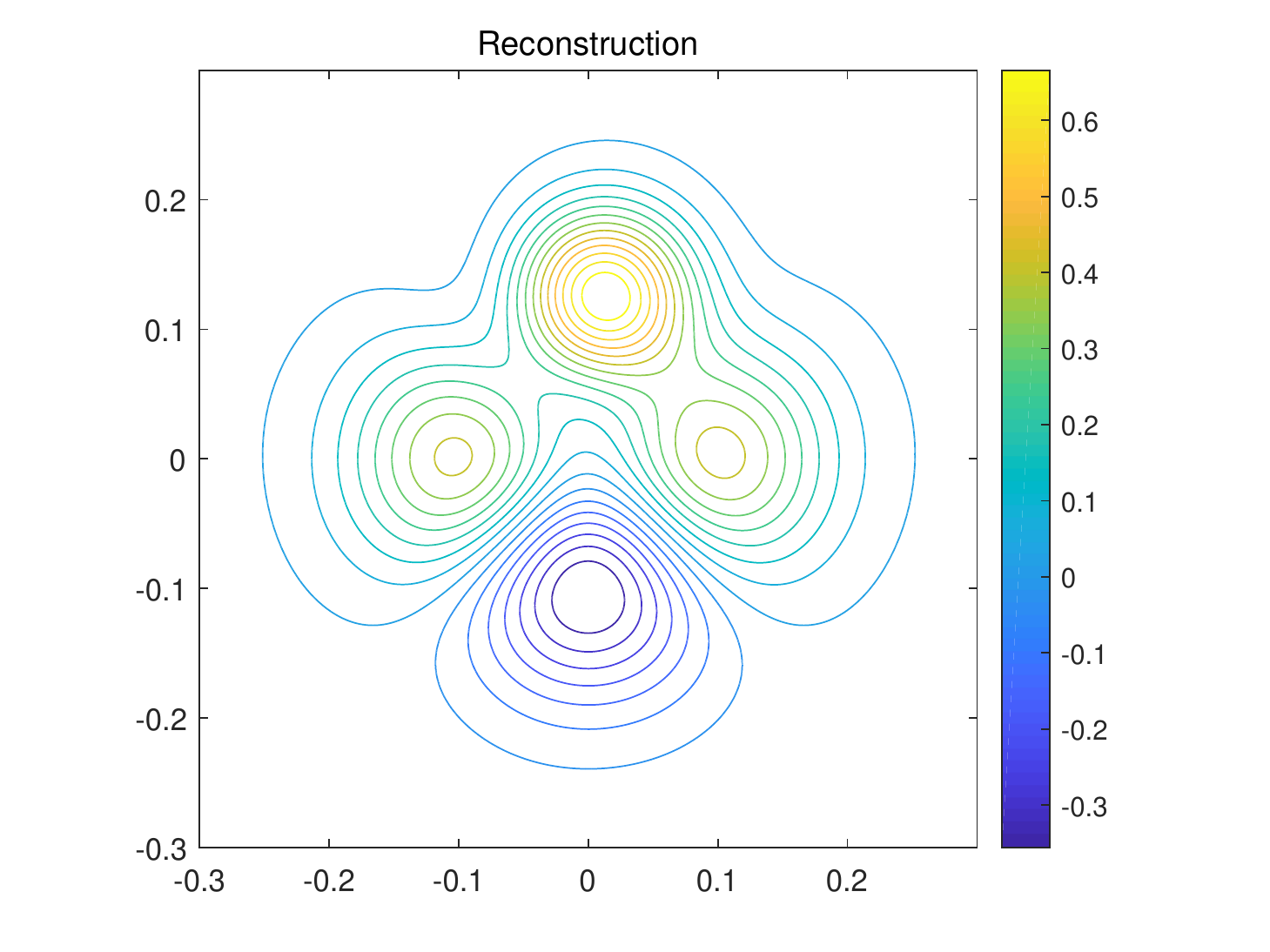}}
	\subfigure[]{\includegraphics[width=0.49\textwidth]{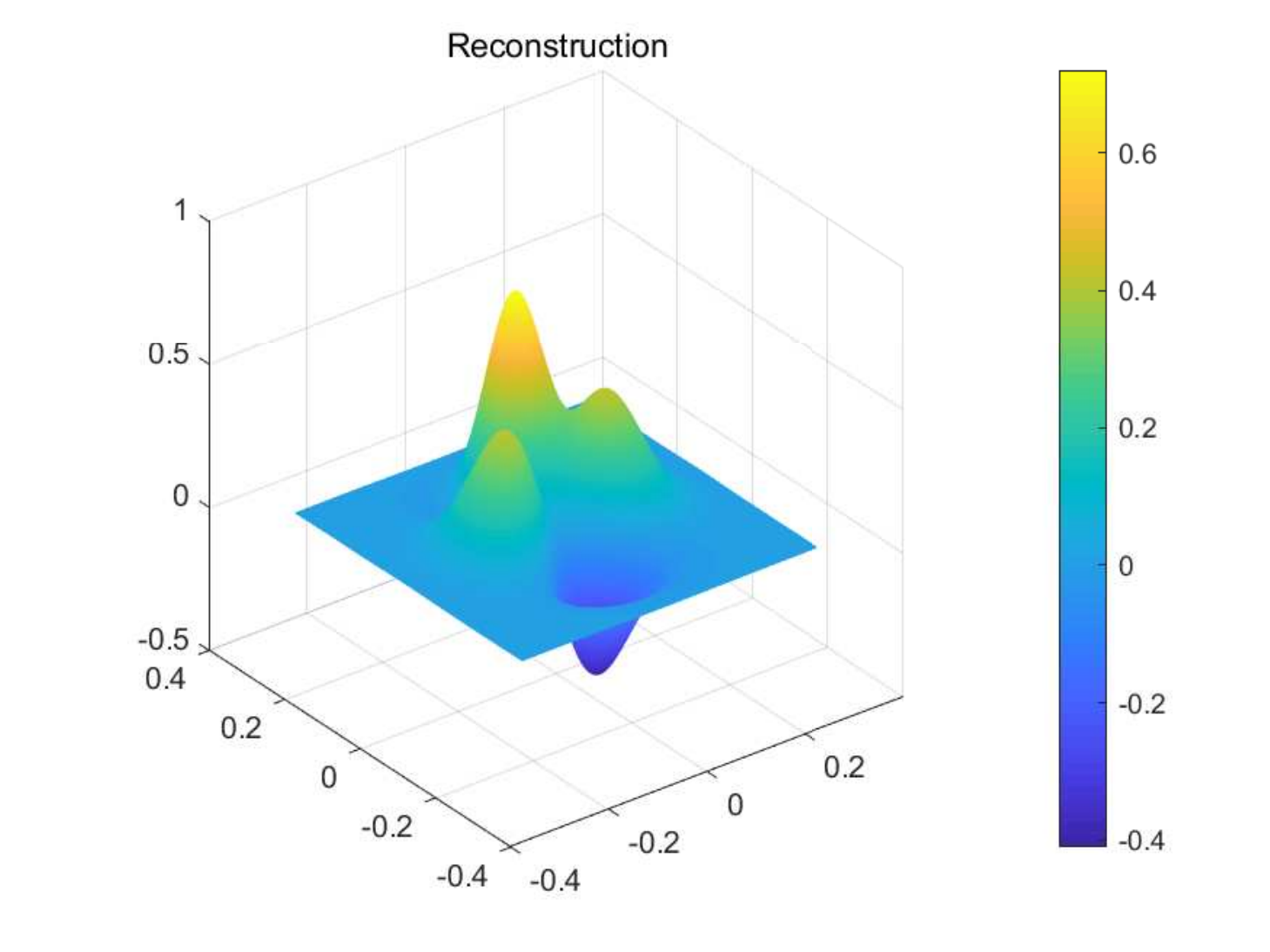}}
	\caption{Top row: The exact source function $S_1$. (a) surface plot (b) contour plot. Bottom row: The reconstruction of source function $S_1$. (c) surface plot (d) contour plot.} \label{fig:Source-mountain}
\end{figure}

\begin{example}\label{E2}
\rm In the second example, we consider a discontinuous source defined by
$$
S_2(x)=
\begin{cases}
1.5, &  x\in D_1  \\
2,    &  x\in D_2 \\
1,    &  x\in D_3 \\
0,    &  \text{elsewhere},
\end{cases}
$$
where $D_1, D_2$ and $D_3$ have respectively the following parametric boundaries
\begin{align*}
 &D_1:\ \frac{2+0.3\cos 3t}{15}(\cos t- 0.1,\ \sin t+0.2 ),\quad t\in[0, 2\pi],\\
 &D_2:\ \frac{0.1+0.08\cos t+0.02\sin 2t}{1+0.7\cos t} (\cos t- 0.2,\ \sin t-0.2 ),\quad t\in[0, 2\pi], \\
 &D_3:\ (0.1\cos t+0.065\cos 2t+0.135, 0.15\sin t-0.2),\quad t\in[0, 2\pi].\\
\end{align*}
\end{example}

Figure \ref{fig:discontinuous} shows the contour plots of the exact- and reconstructed source $S_2$ with different noise levels. One can find that the proposed method has the capability of reconstructing a piecewise constant source function consisting of three disconnected components. Moreover, according to Figure \ref{fig:discontinuous}(c) and (e), we can also see that the relative error occurs mainly on the boundary of the source. This is due to the fact that the the Gibbs phenomena occurs around the discontinuous border when the Fourier method is applied to recover functions with jumps.

\begin{figure}
  \centering
   \subfigure[]{\includegraphics[width=0.32\textwidth]{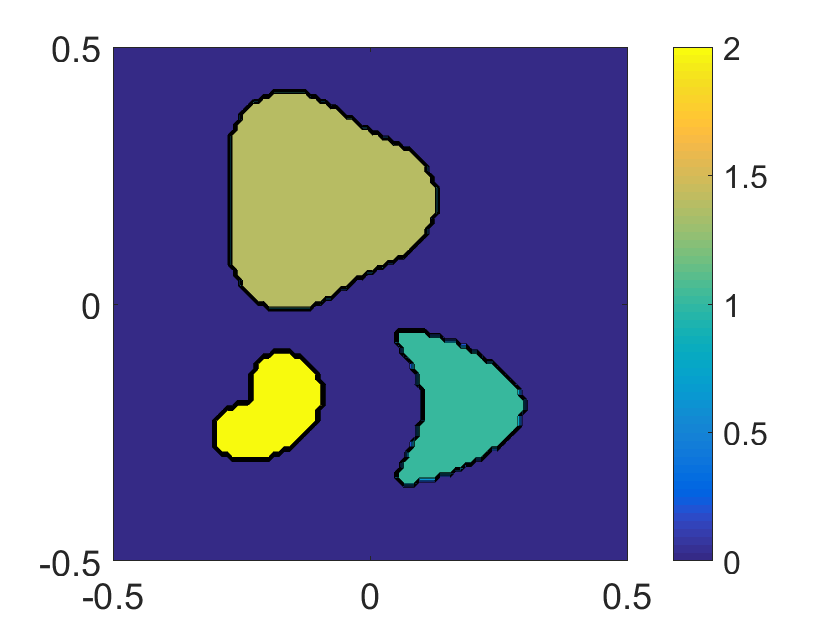}}
   \subfigure[]{\includegraphics[width=0.33\textwidth]{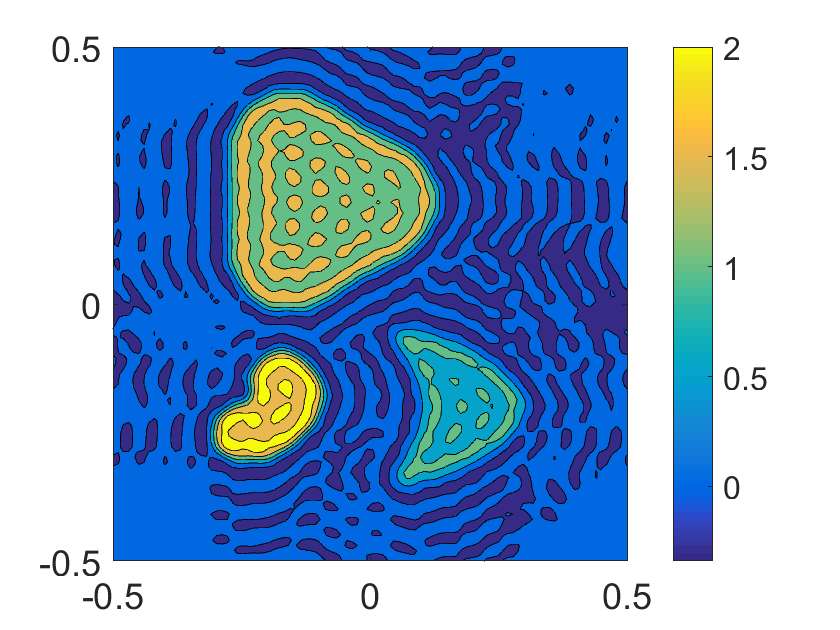}}
   \subfigure[]{\includegraphics[width=0.33\textwidth]{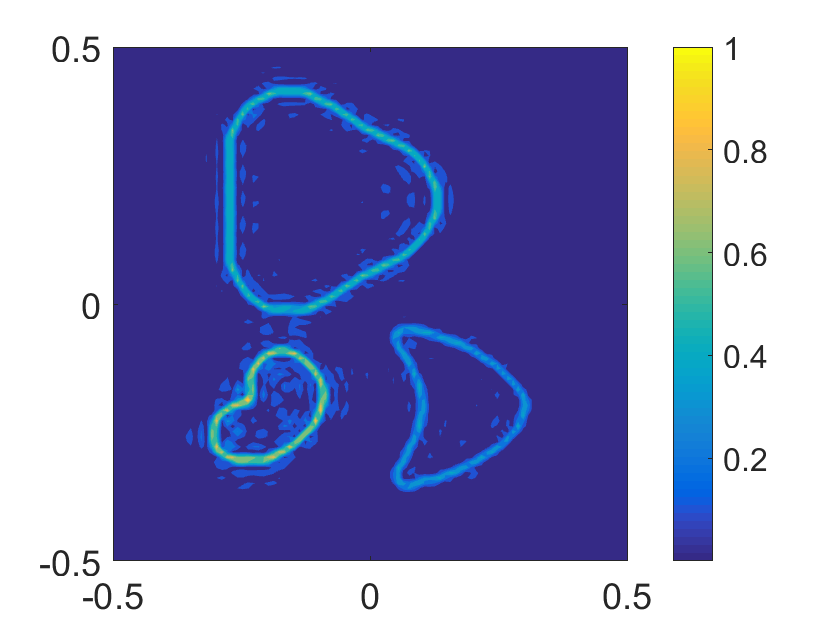}}\\
  \hfill  \subfigure[]{\includegraphics[width=0.33\textwidth]{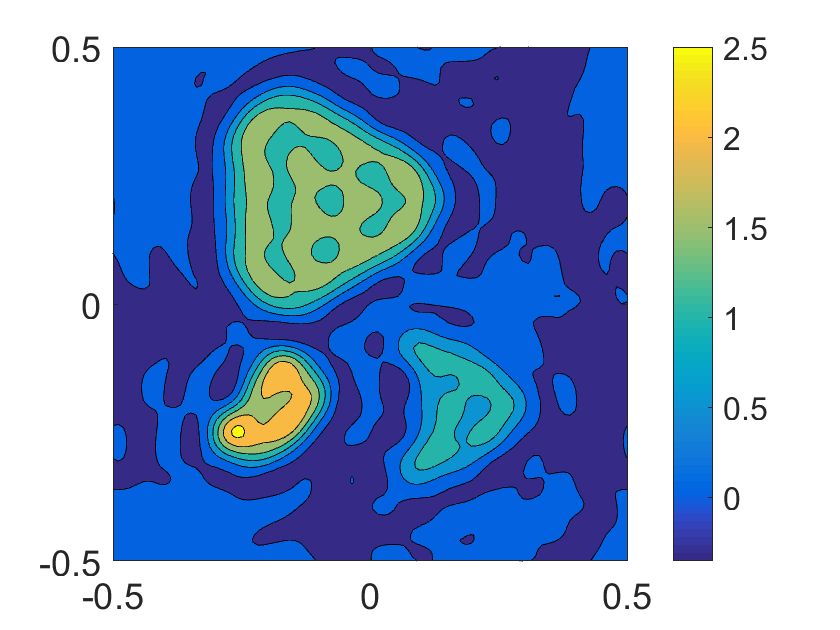}}
   \subfigure[]{\includegraphics[width=0.33\textwidth]{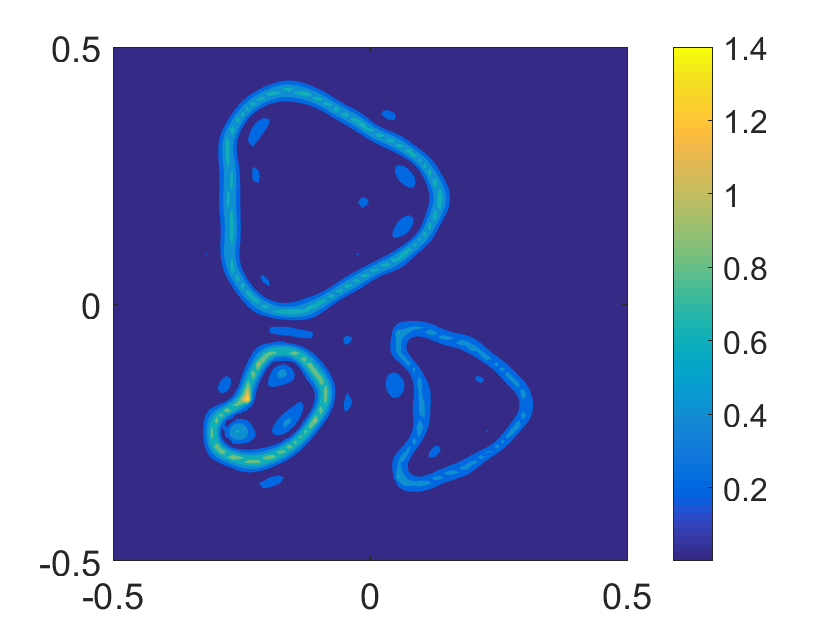}}
   \caption{The exact $S_2$ and the reconstructions $S_2^{\epsilon}$ with noise levels $\epsilon$.
   (a) Exact $S_2$,  (b) $S_2^{\epsilon}$  with $\epsilon=1\%$, (c) $|S_2^{\epsilon}-S_2|$  with $\epsilon=1\%$, (d) $S_2^{\epsilon}$  with $\epsilon=5\%$, (e) $|S_2^{\epsilon}-S_2|$  with $\epsilon=5\%$.}\label{fig:discontinuous}
\end{figure}

\subsection{ Examples in three-dimensions}
\begin{example}\label{E3}
\rm Reconstruction of a  source function in three-dimensions with noise. In this example, we aim to reconstruct an acorn-shaped acoustic source defined in the cube $D=(-0.5,0.5)^3$ by
$$
   S_3(x_1, x_2, x_3)=\mathrm{exp}\left(-\frac{20\sqrt{x_1^2+x_2^2+x_3^2}}{0.6+\sqrt{4.25+2\, \cos3\theta}}\right), \quad \theta=\arccos \frac{x_3}{\sqrt{x_1^2+x_2^2+x_3^2}}.
$$
\end{example}

Figure \ref{fig:hulu-phase} shows the geometry illustration of phase retrieval technique for four typical wavenumbers in the three dimensions. The relative $L^2$ and $L^{\infty}$  errors between the exact- and reconstructed phase data are listed in Table \ref{tab:test2}.
Figure \ref{fig:source-S3} presents the exact source function $S_3$ and the reconstructed results with noise $\epsilon=0.1\% $ via the isosurface plots and slice plots. In the isosurface plots, the gray shadows are projections of the isosurface on each coordinate plane.
These results demonstrate satisfactory imaging performance of the proposed algorithms in three dimensions.
\begin{figure}
\centering
   \subfigure[]{\includegraphics[width=0.48\textwidth]{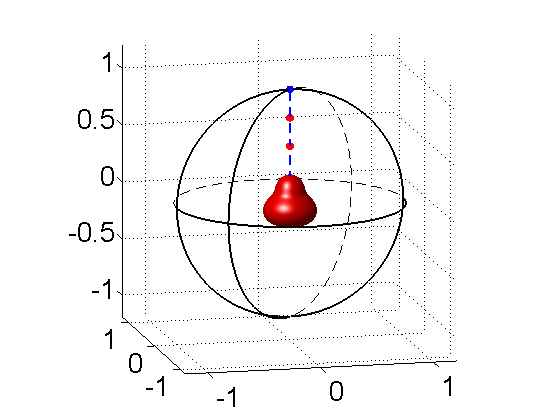}}
   \subfigure[]{\includegraphics[width=0.48\textwidth]{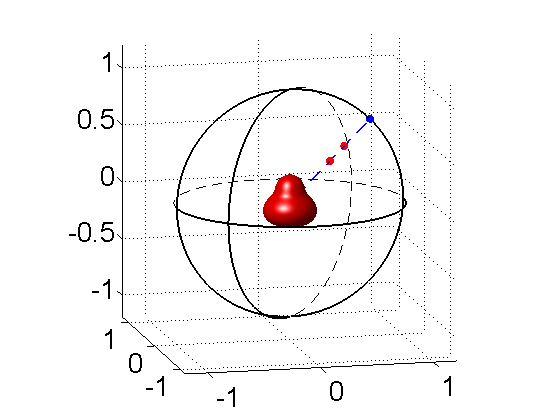}}\\
   \subfigure[]{\includegraphics[width=0.48\textwidth]{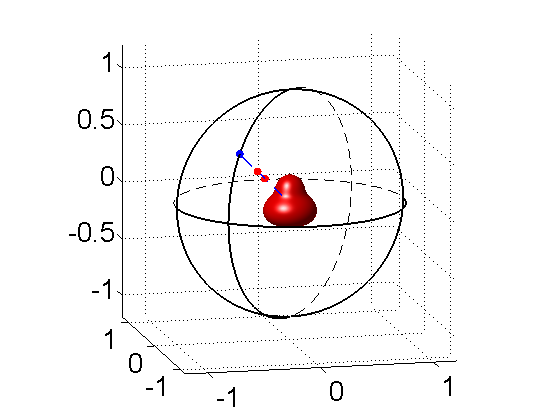}}
   \subfigure[]{\includegraphics[width=0.48\textwidth]{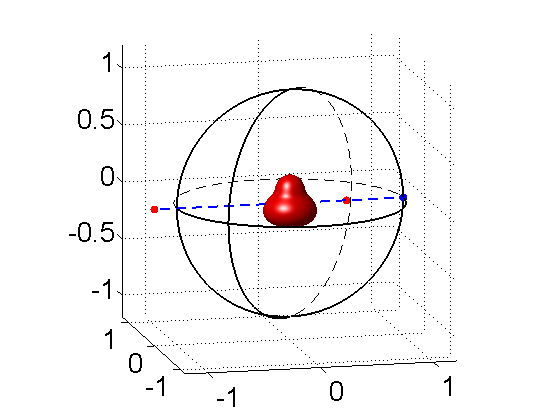}}
   \caption{Geometry illustration of the phase retrieval for Example \ref{E3}. The red points denote the reference point sources and the blue points denote observation directions. (a) $\bm l=(0,1,1)$, (b) $\bm l=(1,0,1)$,  (c) $\bm l=(-1,-1,1)$,  (b) $\bm l=(0,0,0)$. }\label{fig:hulu-phase}
\end{figure}

\begin{figure}
   \centering
   \subfigure[]{\includegraphics[width=0.4\textwidth]{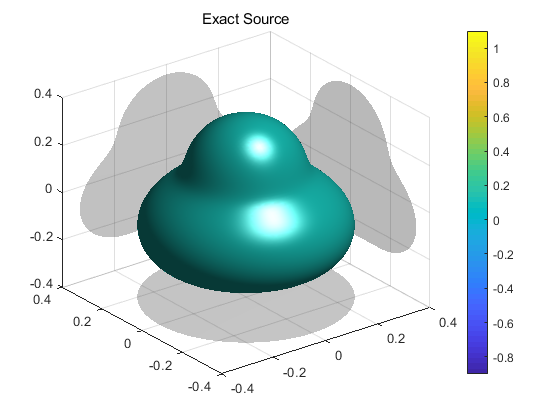}}
   \subfigure[]{\includegraphics[width=0.4\textwidth]{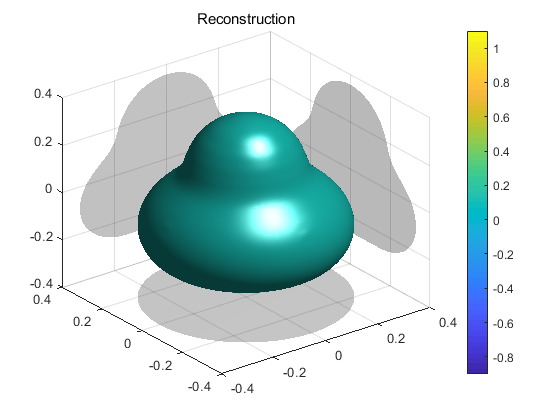}}\\
   \subfigure[]{\includegraphics[width=0.4\textwidth]{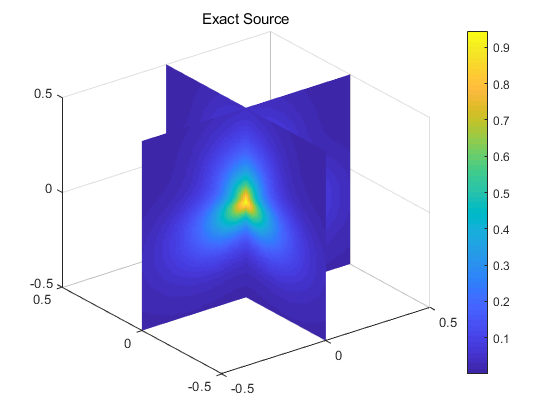}}
   \subfigure[]{\includegraphics[width=0.4\textwidth]{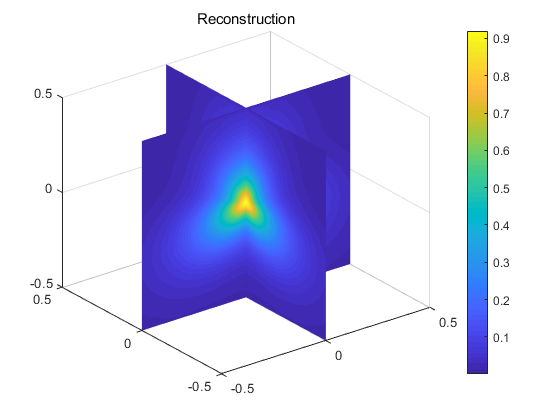}}\\
  \caption{(a) The exact $S_3$ (isosurface level=0.1), (b) reconstruction of  $S_3$ (isosurface level=0.1), (c) the exact $S_3$ (slices at $x_1=0$ and $x_2$=0), (d) reconstruction of   $S_3$ (slices at $x_1=0$ and $x_2$=0)}\label{fig:source-S3}
\end{figure}

\begin{table}[htbp]
 \setlength{\abovecaptionskip}{0pt}
 \setlength{\belowcaptionskip}{10pt}
\centering
 \caption{\label{tab:test2}The relative $L^2$ errors and $L^{\infty}$  errors between $u_\infty (\hat x,k)$ and $u_\infty ^\epsilon(\hat x,k)$ for different noise levels $\epsilon$.}
 \begin{tabular}{lclclcl}
  \toprule
   $S_2(x)$ & $\epsilon=0.1\%$ & $\epsilon=1\%$ & $\epsilon=5\% $ &$\epsilon=10\%$\\
  \midrule
  $L^2$          & $0.06\%$ & $0.56\%$ & $3.10\%$ & $5.97\%$  \\
  $L^{\infty}$  & $0.09\%$ & $0.91\%$ & $3.60\%$ & $10.16\%$ \\
  \bottomrule
 \end{tabular}
\end{table}

\begin{example}\label{E4}
\rm In the last example, we consider the reconstruction of a 3D mountain-shaped source defined in the cube $D=(-0.5,0.5)^3$ by
\begin{align*}
   S_4(x_1, x_2, x_3) = & 1.1\mathrm{exp}\left(-200\left( (x_1-0.01)^2+(x_2-0.12)^2+x_3^2\right)\right) \\ 
   & +100\left(x_1^2-x_2^2\right)\mathrm{exp}\left(-90\left( x_1^2+x_2^2+x_3^2\right)\right).
\end{align*}
\end{example}
Figure \ref{fig:source-S4} illustrates the exact source function $S_4$ and the reconstruction with $0.1\%$ noise. It can be seen that the reconstruction are very close to the exact source $S_4$. To quantitatively exhibit the accuracy, we also list the relative $L^2$ and $L^{\infty}$ errors in Table \ref{tab:test3}.
\begin{figure}
	\centering
	\subfigure[]{\includegraphics[width=0.32\textwidth]{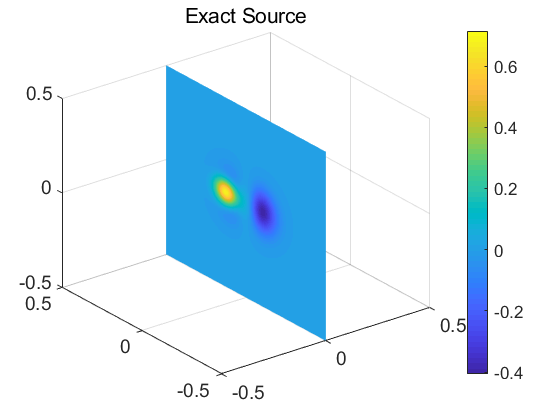}}
	\subfigure[]{\includegraphics[width=0.32\textwidth]{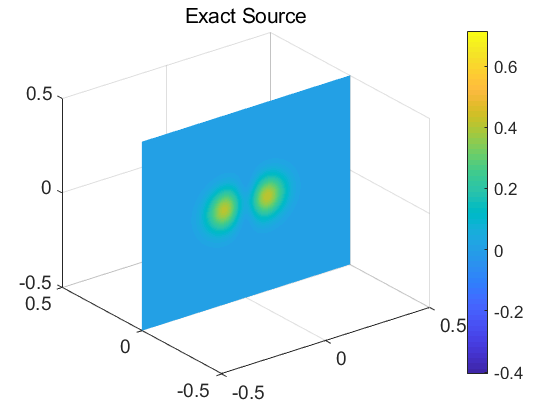}}
        \subfigure[]{\includegraphics[width=0.32\textwidth]{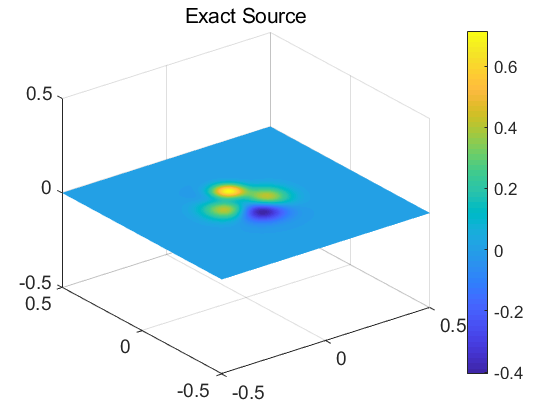}}
	\subfigure[]{\includegraphics[width=0.32\textwidth]{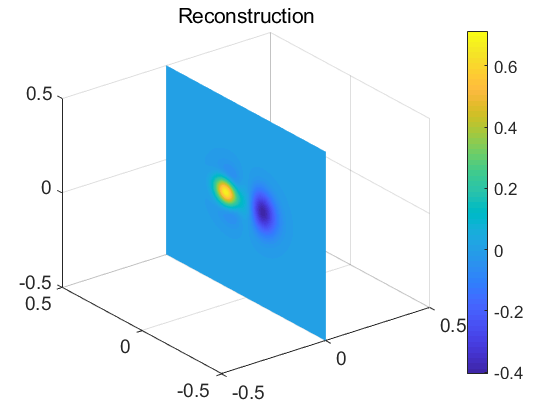}}
        \subfigure[]{\includegraphics[width=0.32\textwidth]{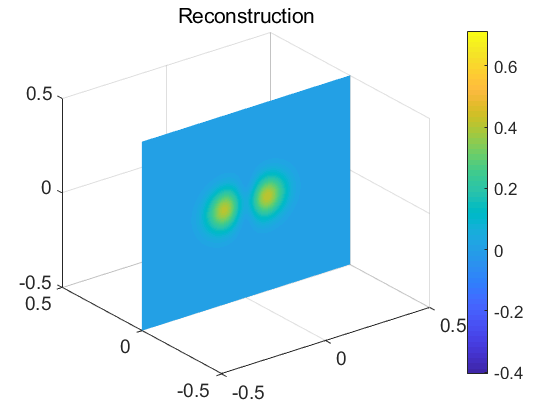}}
	\subfigure[]{\includegraphics[width=0.32\textwidth]{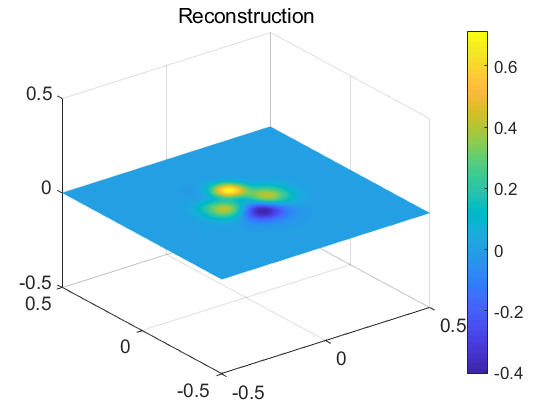}}
	\caption{Top row: slices of the exact source $S_4$. Bottom row: slices of the reconstructed results of $S_4$ . Slice at: (a)(d) $x_1$=0, (b)(e) $x_2$=0, (c)(f) $x_3$=0.}\label{fig:source-S4}
\end{figure}
\begin{table}[htbp]
\setlength{\abovecaptionskip}{0pt}
 \setlength{\belowcaptionskip}{10pt}
\centering
 \caption{\label{tab:test3}The relative $L^2$ errors and $L^{\infty}$  errors between $u_\infty (\hat x, k)$ and $u_\infty^\epsilon(\hat x, k)$ for different noise levels $\epsilon$.}
 \begin{tabular}{lclclcl}
  \toprule
   $S_3(x)$ & $\epsilon=0.1\%$ & $\epsilon=1\%$ & $\epsilon=5\% $ &$\epsilon=10\%$\\
  \midrule
  $L^2$          & $0.06\%$ & $0.55\%$ & $2.85\%$ & $5.89\%$ \\
  $L^{\infty}$  & $0.10\%$ & $0.71\%$ & $3.95\%$ & $9.10\%$ \\
  \bottomrule
 \end{tabular}
\end{table}

\section*{Acknowledgments}

The work of D. Zhang and F. Sun were supported by NSF of China under the grant 11671170. The work of Y. Guo was supported by NSF of China under the grants 11971133, 11601107 and 11671111.


\end{document}